\documentclass[10pt]{amsart}
\usepackage{graphicx,amsmath,epstopdf,xypic,somedefs,tracefnt,latexsym,rawfonts,amsfonts,amssymb,latexsym,enumerate,amscd}
\def\cal{\mathcal}
\def\Bbb{\mathbb}

\def\r{\rangle}
\def\l{\langle}

\def\vtr{\trianglelefteq}
\newtheorem{prop}{Proposition}[section]
\newtheorem{thm}{Theorem}[section]

\newtheorem{lemma}{Lemma}[section]
\newtheorem{slemma}{Sublemma}[section]
\newtheorem{cor}{Corollary}[section]

\newtheorem{rem}{Remark}[section]
\setcounter{MaxMatrixCols}{20}
\numberwithin{equation}{section}
\begin{document}
\title{The affine Artin group of type $\widetilde B_n$ is virtually poly-free}
\author[Li Li]{Li Li}
\address{Department of Mathematics and Statistics\\
Oakland University\\
Rochester, MI 48309-4479, USA}
\email{li2345@oakland.edu} 
\author[S.K. Roushon]{S.K. Roushon}
\address{School of Mathematics\\
Tata Institute\\
Homi Bhabha Road\\
Mumbai 400005, India}
\email{roushon@math.tifr.res.in} 
\urladdr{http://mathweb.tifr.res.in/\~\ roushon/}
\date{\today}
\begin{abstract}
In this note we prove that the affine Artin group of type
$\widetilde B_n$ is virtually poly-free. The proof also gives another
solution of the $K(\pi, 1)$ problem for $\widetilde B_n$.\end{abstract}

\keywords{Affine Artin groups, poly-free groups.}

\subjclass[2010]{Primary: 19B99,19G24,20F36,57R67 Secondary: 57N37.}
\maketitle

\section{Introduction}

Let $K=\{s_1,s_2,\dots,s_k\}$ be a finite set, and $m:K\times K\to
\{1,2,\dots, \infty\}$ be a 
map with the property that $m(s,s)=1$, and $m(s',s)=m(s,s')\geq 2$ for $s\neq s'$. 
The {\it Coxeter group} associated with the pair $(K,m)$ is, by definition, the 
following group.
$${\cal W}_{(K,m)}=\langle K\ |\ (ss')^{m(s,s')}=1,\ s,s'\in K\ \text{and}\ m(s,s')<\infty\rangle.$$
A complete 
classification of finite, irreducible Coxeter groups is known
(\cite{Cox}).
Finite  Coxeter groups are exactly the finite reflection groups.
Also, there are infinite Coxeter groups which are affine reflection groups (\cite{Hum}). 

The {\it Artin group} associated with the Coxeter
group ${\cal W}_{(K, m)}$ is, by definition, the following group.
$${\cal A}_{(K, m)}=\l K\ |\ ss'ss'\cdots = s'ss's\cdots,\ s,s'\in K\r.$$
Here, the number of times the factors in $ss'ss'\cdots$ appear is 
$m(s,s')$; e.g., if $m(s,s')=3$, then the relation is $ss's=s'ss'$. 
${\cal A}_{(K,m)}$ is called the {\it Artin group of type 
${\cal W}_{(K,  m)}$}.

A {\it finite type} or an {\it affine type} Artin group is 
the Artin group associated with a finite or an affine type Coxeter group, respectively.
There are {\it complex type} Artin groups also, which are the Artin
groups whose corresponding Coxeter groups are generated by reflections
along complex hyperplanes in some complex space.

For some details on this
subject see \cite{Hum}, \cite{Bri1} and \cite{Br}.

A group $G$ is
    called {\it poly-free}, if $G$ 
    admits a normal series $1=G_0\vtr G_1\vtr G_2\vtr \cdots \vtr G_n=G$, such that 
    $G_{i+1}/G_i$ is free, for $i=0,1,\dots, n-1$. $G$ is called
    {\it virtually poly-free} if $G$ contains a finite index poly-free 
    subgroup.

Poly-free groups have nice properties like, locally indicable and right orderable. Also, 
an inductive argument using [\cite{DS}, Theorem 2.3] shows that 
a virtually poly-free group has finite asymptotic dimension.

It is still an open question if all Artin groups are virtually poly-free. See
[\cite{MB}, Question 2].
Some of the finite type Artin groups are already known to be virtually
poly-free (\cite{Br}).
It was shown  
in \cite{BMP} that the even Artin groups (that is when $m(s,s')=2$ for all $s\neq s'$)
of $FC$-types (certain amalgamation of finite type even Artin groups) are poly-free.
A simple proof of this result of \cite{BMP} is given in \cite{W}.

We extended this class and proved in [\cite{Rou20}, Theorem 2.19], that an Artin group of the affine type
  $\tilde A_n$, $\tilde B_n$, $\tilde C_n$, $\tilde D_n$ or of the finite
  complex type $G(de,e,r)$ ($d,r\geq 2$) is virtually poly-free.

However, in \cite{JF23}, an error was pointed out in \cite{Rou20}. Therefore,
the statement (\cite{Rou20}, Theorem 2.19) 
that the affine Artin groups
${\cal A}_{\widetilde B_n}$ and ${\cal A}_{\widetilde D_n}$ of types 
$\widetilde B_n$ and $\widetilde D_n$, respectively, are
virtually poly-free, needs a new proof.

In this article we give an argument to settle the claim in the
$\widetilde B_n$ case. The proof involves the construction of a fibration
between two hyperplane arrangement complements in the
complex space (Proposition \ref{fibration}), which also
gives another solution of the $K(\pi, 1)$ problem
for ${\cal A}_{\widetilde B_n}$ (Corollary \ref{kpione}). See \cite{CMS10}. 

\begin{thm}\label{mt} ${\cal A}_{\widetilde B_n}$
  ($n\geq 3$) is virtually poly-free.\end{thm}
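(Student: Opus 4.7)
The plan is to exhibit $\mathcal A_{\widetilde B_n}$ as an extension with free kernel of a group already known to be virtually poly-free, and then combine a finite-index poly-free filtration of the quotient with the free kernel to build one for $\mathcal A_{\widetilde B_n}$.

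First, I would identify $\mathcal A_{\widetilde B_n}$ with the fundamental group of a complex hyperplane arrangement complement: take the complexified affine $\widetilde B_n$ reflection arrangement, quotient by the translation lattice and the finite Weyl group $W(B_n)$, and remove the images of the reflection hyperplanes; call the resulting space $\mathcal Y_{\widetilde B_n}$. The identification of $\pi_1(\mathcal Y_{\widetilde B_n})$ with $\mathcal A_{\widetilde B_n}$ requires the $K(\pi,1)$ property, which is exactly what Corollary \ref{kpione} will supply (as a by-product of the fibration below).

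Next, apply Proposition \ref{fibration}: it supplies a locally trivial fibration $\mathcal Y_{\widetilde B_n}\to \mathcal Y$ whose base is a hyperplane arrangement complement whose fundamental group is already known to be virtually poly-free via the surviving cases of \cite{Rou20}, Theorem 2.19 (a natural candidate being the $\widetilde C_{n-1}$ or $\widetilde A_{n-1}$ complement), and whose fiber is $\mathbb C$ or $\mathbb C^{\ast}$ minus finitely many points, hence has finitely generated free fundamental group. Taking $\pi_1$ of this fibration yields a short exact sequence
$$1\to F_k\to \mathcal A_{\widetilde B_n}\to \Gamma\to 1,$$
with $F_k$ free of finite rank and $\Gamma$ virtually poly-free. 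Pulling back a finite-index poly-free subgroup $\Gamma_0\trianglelefteq \Gamma$ to a finite-index subgroup $H\leq \mathcal A_{\widetilde B_n}$ and prefixing the poly-free filtration of $\Gamma_0$ with the free kernel $F_k$ exhibits $H$ as poly-free, proving the theorem.

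The hard part will be Proposition \ref{fibration} itself. A naive coordinate-forgetting projection from the $\widetilde B_n$-complement tends to fail local triviality because the affine hyperplanes and the translation-lattice quotient interact delicately: the fiber type can jump over singular strata of the base, and the image of the removed hyperplanes may fail to coincide with the discriminant in the base. The essential geometric work is therefore to design the projection so that it is actually a fiber bundle with fiber a punctured plane or cylinder, and so that the base is itself an arrangement complement of a type already handled. Once this is done, the short exact sequence and the virtually poly-free conclusion above follow routinely, and the $K(\pi,1)$ statement for $\mathcal A_{\widetilde B_n}$ comes for free by combining the fibration with the $K(\pi,1)$ property of the base.
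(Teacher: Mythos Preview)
Your proposal misidentifies where the fibration of Proposition~\ref{fibration} sits, and this leads to several concrete gaps.

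First, the identification $\mathcal A_{\widetilde B_n}\simeq\pi_1(M_{\widetilde B_n}/W_{\widetilde B_n})$ does \emph{not} require the $K(\pi,1)$ property; it is a presentation-theoretic fact (see \cite{Ngu83}). Asphericity concerns the vanishing of higher homotopy groups, not the computation of $\pi_1$. Your first paragraph is therefore circular in appearance and unnecessary in fact.

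Second, Proposition~\ref{fibration} does not fiber the orbit space $\mathcal Y_{\widetilde B_n}=M_{\widetilde B_n}/W_{\widetilde B_n}$. The paper's architecture is the opposite of what you propose: one first uses the covering $M_{\widetilde B_n}\to M_{\widetilde B_n}/W_{\widetilde B_n}$ to obtain
\[
1\to \pi_1(M_{\widetilde B_n})\to \mathcal A_{\widetilde B_n}\to W_{\widetilde B_n}\to 1,
\]
and the virtually poly-free quotient is the affine Weyl group $W_{\widetilde B_n}$ itself (virtually free abelian), not an $\widetilde A_{n-1}$- or $\widetilde C_{n-1}$-type Artin group salvaged from \cite{Rou20}. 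The problem thus reduces to showing that the \emph{pure} part $\pi_1(M_{\widetilde B_n})$ is poly-free. Only then, after passing through the exponential cover $M_{\widetilde B_n}\to N$, a M\"obius change of variables $N\cong P$, and a stabilization $P\times\mathbb C^*\cong Y$, does Proposition~\ref{fibration} enter: it fibers $Y$ over the configuration space $Z$ of $n-1$ distinct points in $\mathbb C^*$. Poly-freeness of $\pi_1(Z)$ is obtained by iterating Fadell--Neuwirth, not by appeal to another affine Artin group.

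Third, your expectation for the fiber is wrong: the map $f$ in Proposition~\ref{fibration} is cubic in the $y_i$, and its fiber is not a punctured plane or cylinder but a connected genus $(3n-6)2^{n-3}+1$ surface with $3\cdot 2^{n-2}$ punctures. Its fundamental group is still free (being an open surface group), so the poly-free conclusion survives, but the ``hard part'' you allude to is considerably harder than designing a coordinate-forgetting map: one must compactify the graph of $f$ in $\mathbb P^n\times Z$, verify smoothness and submersiveness on the boundary strata, and invoke Thom's First Isotopy Lemma.
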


\begin{proof}
Consider the following complement in ${\Bbb C}^n$ of the
reflecting hyperplanes in the case of the affine reflection
group $W_{\widetilde B_n}$ of type $\widetilde
B_n$ (\cite{All02}, Section 4).
$$M_{\widetilde B_n}=\{u\in{\Bbb C}^n\ |\ u_i\pm u_j\notin {\Bbb Z},\ \text{for}\
i\neq j; u_k\notin {\Bbb Z},\ \text{for}\ k=1,2,\dots, n\}.$$

Recall that $W_{\widetilde B_n}$ acts properly
discontinuously on $M_{\widetilde B_n}$ and ${\cal A}_{\widetilde B_n}\simeq
\pi_1(M_{\widetilde B_n}/W_{\widetilde B_n})$. Hence, we have the following short exact sequence
(\cite{Ngu83}, Section 5).

\centerline{
  \xymatrix{1\ar[r]&\pi_1(M_{\widetilde B_n})\ar[r]&{\cal A}_{\widetilde
      B_n}\ar[r]&W_{\widetilde B_n}\ar[r]&1.}}

Next, note that an affine reflection group is an extension
of the corresponding finite reflection group, and a
finitely generated free abelian group. That is, $W_{\widetilde B_n}$
contains a finitely generated free abelian group
of finite index (\cite{All02}, Section 4).

Therefore, it is enough to prove that $\pi_1(M_{\widetilde B_n})$ is poly-free, since
an extension of a virtually poly-free group by a poly-free group is
virtually poly-free.

Consider the exponential covering map $exp:{\Bbb C}\to {\Bbb
  C^*}:={\Bbb C}-\{0\}$, $z\mapsto e^{2\pi iz}$. Then $exp$ induces the following covering map.
$$M_{\widetilde B_n}\to N:=\{v\in{\Bbb C}^n\ |\ v_i\neq v_j^{\pm 1},\ \text{for}\
i\neq j; v_k\neq 0,1,\ \text{for}\ k=1,2,\dots, n\}.$$

Consequently, it is enough to prove that $\pi_1(N)$ is poly-free, since
subgroup of a poly-free group is poly-free.

Now, we consider the map $\eta:{\Bbb C}-\{0,1\}\to {\Bbb C}-\{\pm 1\}$ defined by  
$\alpha\mapsto \frac{\alpha+1}{\alpha-1}$. This map induces the following homeomorphism.
$$N\to P:=\{w\in{\Bbb C}^n\ |\ w_i\neq \pm w_j,\ \text{for}\
i\neq j; w_k\neq \pm 1,\ \text{for}\ k=1,2,\dots, n\}.$$

Next, consider the following homeomorphism.
$$P\times {\Bbb C}^*\to \{y\in {\Bbb C}^{n+1}\ |\ y_i\neq \pm y_j,\ \text{for}\
i\neq j; y_1\neq 0 \}.$$
$$(w_1,w_2,\dots w_n,\lambda)\mapsto (\lambda, \lambda w_1,\lambda
w_2,\dots \lambda w_n).$$

We will show that the fundamental group of the range space 
is poly-free, which is the content of Proposition \ref{fibration}. Hence, $\pi_1(P)$
is poly-free.

This completes the proof of the theorem.
\end{proof}

\begin{rem}{\rm Together with \cite{BFW23}, Theorem \ref{mt} implies 
    that the Farrell-Jones Isomorphism conjecture is true for ${\cal
      A}_{\widetilde B_n}$. For an alternate proof of the conjecture for
    ${\cal A}_{\widetilde B_n}$, see the Corrigendum in
    \cite{Rou22}.}\end{rem}

\noindent
{\bf Acknowledgement.} The authors are grateful to the referee for
reading the paper carefully and for pointing out the reference \cite{ADR20}.

\section{A fibration}
In this section we state and prove the proposition
we referred to in the Introduction.

\begin{prop}\label{fibration}
  Consider the following two spaces.
$$Y=\{y=(y_1,y_2,\dots, y_n)\in {\Bbb C}^n\ |\ y_i\neq \pm y_j,\ \text{for}\ 
  i\neq j; y_1\neq 0 \}$$ and 
  $$Z=\{z=(z_1,z_2,\dots, z_{n-1})\in ({\Bbb C}^*)^{n-1}\ |\ z_i\neq z_j,\ \text{for}\ i\neq
  j\}.$$ 
Then the map  
$$f:Y\to Z, \; (y_1,y_2,\cdots, y_n)\mapsto (y_1(y_1^2-y_n^2),\dots, y_1(y_{n-1}^2-y_n^2))$$
  is a locally
  trivial fibration, with non-compact connected fibers homeomorphic
  to a $2$-manifold of genus $(3n-6)2^{n-3}+1$, with $3\cdot 2^{n-2}$ points removed. 

Consequently, $\pi_1(Y)$ is poly-free.
\end{prop}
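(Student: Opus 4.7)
The plan is to explicitly identify the fibers of $f$ as branched covers of an elliptic curve, verify that $f$ is a locally trivial topological fibration, and then extract poly-freeness of $\pi_1(Y)$ from the homotopy long exact sequence.

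For a fixed $z=(z_1,\dots,z_{n-1})\in Z$, the equations $y_1(y_i^2-y_n^2)=z_i$ show that the projection $(y_1,\dots,y_n)\mapsto (y_1,y_n)$ exhibits $f^{-1}(z)$ as a $({\mathbb Z}/2)^{n-2}$-Galois cover of the affine plane curve $C_z:=\{y_1^3-y_1 y_n^2=z_1\}$, with the $i$-th factor acting by $y_i\mapsto -y_i$. A direct check using $z_i\neq 0$ and $z_i\neq z_j$ shows that the non-equality constraints defining $Y$ are automatic on the cover. The projective closure $\overline{C}_z\subset{\mathbb P}^2$ is a smooth elliptic curve (since $z_1\neq 0$) meeting the line at infinity at $[0:1:0]$, $[1:1:0]$, $[1:-1:0]$, so $C_z$ is a torus with $3$ punctures. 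Each double cover $y_i^2=y_n^2+z_i/y_1$ ramifies over $\{y_1^3=z_1-z_i\}\subset C_z$, a set of six distinct points, and the branch loci for different $i$ are disjoint since the $z_i$ are distinct. A local analysis at the three points at infinity shows no further ramification. Riemann-Hurwitz then yields $\chi(f^{-1}(z))=-3(n-1)\cdot 2^{n-2}$, matching the stated genus and puncture count; the cover is connected because for any nonempty $I\subseteq\{2,\dots,n-1\}$, the sub-cover defined by $w^2=\prod_{i\in I}(y_n^2+z_i/y_1)$ has $6|I|$ simple zeros on $\overline{C}_z$, hence is nontrivial.

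The main obstacle is proving local triviality of $f$, since $f$ is non-proper. My approach is to complete fiberwise at infinity, producing a proper smooth family $\bar{f}:\bar{Y}\to Z$ of compact Riemann surfaces to which Ehresmann's theorem applies. The removed divisor $\bar{Y}\setminus Y$ is a finite covering of $Z$ of degree $3\cdot 2^{n-2}$ meeting each fiber transversally, so after trivializing $\bar{f}$ locally and adjusting fiberwise to carry the marked punctures to fixed points, local triviality of $\bar{f}$ descends to $f$.

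Finally, $Z$ is the ordered configuration space of $n-1$ points in ${\mathbb C}^*$, which by iterated Fadell-Neuwirth fibrations is a $K(\pi,1)$ with poly-free fundamental group; in particular $\pi_2(Z)=0$. With the fiber $F$ connected, the long exact sequence for $f$ collapses to $1\to \pi_1(F)\to \pi_1(Y)\to \pi_1(Z)\to 1$. Since $F$ is an open surface, $\pi_1(F)$ is free; together with $\pi_1(Z)$ poly-free, this exhibits $\pi_1(Y)$ as poly-free.
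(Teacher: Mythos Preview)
Your fiber analysis is correct and genuinely different from the paper's. You project the fiber to the curve $C_z=\{y_1^3-y_1y_n^2=z_1\}$ in the $(y_1,y_n)$-plane, realizing $f^{-1}(z)$ as a $(\mathbb{Z}/2)^{n-2}$-cover of a thrice-punctured elliptic curve; the paper instead projects to $y_1\in\mathbb{P}^1$, obtaining a $2^{n-1}$-sheeted cover of the sphere. Both invoke Riemann--Hurwitz, but your base has genus~$1$ and your ramification data are more transparent (six simple branch points per factor, pairwise disjoint), which also makes your connectedness argument via non-square products clean. The paper's connectedness argument lifts explicit loops around the cube-root branch points. Your route is arguably more geometric; the paper's is more hands-on.

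Where you and the paper converge is the local triviality step, and here your treatment is thinner. You assert the existence of a proper smooth family $\bar f:\bar Y\to Z$ with $\bar Y\setminus Y$ a finite \'etale cover of $Z$, then invoke Ehresmann. The paper does exactly this, but with substantial work: it takes the closure $\overline C$ of the graph of $f$ in $\mathbb{P}^n\times Z$, writes down explicit defining equations (the naive homogenization acquires a spurious component $y_0=y_1=0$, so extra equations from the set $T$ are needed), identifies the boundary as two explicit pieces $B_1\cup B_2$, and then checks by Jacobian computations that $\overline C$ is smooth, that $\pi|_{\overline C}$ is a submersion, and that $\pi|_B$ is a submersion (Lemmas~2.1--2.3), before applying Thom's First Isotopy Lemma. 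Your elliptic-curve description would let you build $\bar Y$ as the $(\mathbb{Z}/2)^{n-2}$-cover of the smooth family of projective cubics $\overline{C_z}$ branched along the smoothly varying locus $\{y_1^3=z_1-z_i\}$, which would indeed give a smooth total space; but you should say so and verify that the branch locus is \'etale over $Z$ and that the cover is unramified over the three sections at infinity (you checked the latter pointwise, which is most of it). As written, ``complete fiberwise at infinity, producing a proper smooth family'' is a plan, not a proof; the paper's Jacobian computations are precisely what fill this gap.

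The final poly-freeness deduction (Fadell--Neuwirth for $Z$, then the homotopy long exact sequence) is identical to the paper's.
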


\begin{rem}{\rm The referee pointed out to us that
Proposition \ref{fibration} is
also a special case of the main result in \cite{ADR20}, which
we were not aware of at the time of writing this paper. However, our proof
differs from the proof given in \cite{ADR20}.}\end{rem}

\begin{cor}\label{kpione} (\cite{CMS10}) $M_{\widetilde B_n}$ is aspherical.\end{cor}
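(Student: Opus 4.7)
The strategy is to propagate asphericity backwards through the chain of maps already constructed in the proof of Theorem \ref{mt}: $M_{\widetilde B_n}$ is a covering space of $N$, $N$ is homeomorphic to $P$, and $P\times{\Bbb C}^*$ is homeomorphic to the space $Y$ of Proposition \ref{fibration} (in the appropriate dimension). Asphericity is preserved by covers, by homeomorphisms, and by splitting off an aspherical Cartesian factor such as ${\Bbb C}^*$, so it suffices to prove that $Y$ is aspherical.

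For this I would apply Proposition \ref{fibration}, which exhibits $Y$ as the total space of a locally trivial fibration $f:Y\to Z$. Its fiber is a non-compact connected surface of finite type, hence homotopy equivalent to a graph and in particular a $K(\pi,1)$. For the base, I would note that $Z$ is the ordered configuration space of $n-1$ distinct points in ${\Bbb C}^*$, and invoke the Fadell--Neuwirth fibration theorem: the coordinate-forgetting maps yield a tower of locally trivial fibrations whose fibers are punctured planes and whose base case is ${\Bbb C}^*\simeq S^1$. Each stage in the tower has aspherical fiber and, inductively, aspherical base, so by iterating the long exact sequence of homotopy groups $Z$ is aspherical.

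With both base and fiber of $f$ aspherical, one more application of the long exact sequence of the fibration forces $\pi_k(Y)=0$ for $k\geq 2$, so $Y$ is aspherical. Unwinding the chain then gives asphericity of $P$ (from the product with ${\Bbb C}^*$), hence of $N$ (by homeomorphism), and hence of the connected covering space $M_{\widetilde B_n}$. The substantive work is already absorbed into Proposition \ref{fibration}; beyond invoking that result, the only non-trivial ingredient is Fadell--Neuwirth, and the rest is a mechanical iteration of the homotopy long exact sequence, with no step expected to present a real obstacle.
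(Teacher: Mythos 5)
Your proposal matches the paper's proof of Corollary~\ref{kpione} step for step: you deduce asphericity of $Y$ from Proposition~\ref{fibration} via the fibration over the aspherical base $Z$ (itself handled by Fadell--Neuwirth) with aspherical surface fiber, and then propagate asphericity back through the homeomorphisms $P\times{\Bbb C}^*\cong Y$, $N\cong P$, and the covering $M_{\widetilde B_n}\to N$. This is exactly the paper's argument, merely spelled out in slightly more detail.
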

\begin{proof} From Proposition \ref{fibration}, we get that $Y$ is aspherical, since
  it fibers over an aspherical space with aspherical fiber. And hence, $P$
  is aspherical, since $P\times {\Bbb C}^*$ is homeomorphic to $Y$ (for $n+1$).
  Therefore, $N$ is aspherical, as $N$ is homeomorphic to $P$. Consequently, $M_{\widetilde B_n}$ is
  aspherical, because $M_{\widetilde B_n}$ is a covering space of $N$.\end{proof}

In [\cite{CMS10}, Lemma 3.1] it was proved that $Y$ is a simplicial arrangement. Then, 
since by \cite{Del}, a simplicial arrangement is aspherical, so is $Y$.

Proposition \ref{fibration} is an application of Thom's First
Isotopy Lemma (\cite{Tho69} and \cite{Mat71}), and the
Fadell-Neuwirth Fibration Theorem (\cite{FN62}, Theorem 3).

\begin{thm} (Thom's First Isotopy Lemma) Let $f:Y\to Z$ be a smooth
  map between two smooth manifolds $Y$ and $Z$. Let $A\subset Y$ be a
closed Whitney stratified subset. Let $f|_A$ be proper and $f|_S$ is
a submersion for every stratum $S$ of $A$. Then, $f|_A$ is a locally
trivial fibration. Moreover, $f|_S$ is a locally trivial
fibration for any stratum $S$ of $A$.\end{thm}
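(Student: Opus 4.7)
The plan is to combine two standard tools---Thom's First Isotopy Lemma and the Fadell--Neuwirth Fibration Theorem---with an explicit Riemann--Hurwitz computation for a single fiber. The argument proceeds in three stages: first, show that $f$ is a locally trivial fibration; second, identify the topological type of the fiber; and third, deduce poly-freeness of $\pi_1(Y)$ from the homotopy long exact sequence.

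For the fibration claim I would extend $f$ to a map $\bar f$ between suitable compactifications of $\mathbb{C}^n$ and $\mathbb{C}^{n-1}$, equipping the ambient space with a Whitney stratification whose top-dimensional stratum is $Y$ and whose lower strata are the smooth loci of the hyperplanes $y_i\pm y_j=0$, the hyperplane $y_1=0$, the divisor at infinity, and their iterated intersections. Taking $A=\bar f^{-1}(Z)$ makes $\bar f|_A$ proper over $Z$. On the open stratum $Y$ the submersion condition reduces to checking that the Jacobian of $f$ has rank $n-1$; a short case analysis based on whether various $y_i$ vanish, combined with the constraints $y_i\neq\pm y_j$ defining $Y$, confirms this. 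Analogous rank checks on the deeper strata, where the conditions $z_i\neq 0$ and $z_i\neq z_j$ defining $Z$ remain in force, complete the hypotheses of Thom's lemma.

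The technical core is the analysis of a single fiber $F=f^{-1}(z)$ for $z\in Z$. The defining equations become $y_1^3-y_n^2y_1=z_1$ together with $y_i^2=y_n^2+z_i/y_1$ for $i=2,\dots,n-1$. The projection $\pi:F\to\mathbb{C}_{y_n}$, $y\mapsto y_n$, is a branched cover of degree $3\cdot 2^{n-2}$: the cubic contributes $3$ choices of $y_1$ over a generic $y_n$ and each quadratic contributes $2$ sign choices of $y_i$. A pleasant observation is that the hypotheses $z_i\neq 0$ and $z_i\neq z_j$ force the loci $y_1=0$ and $y_i=\pm y_j$ to miss $F$ entirely, so no punctures arise from the hyperplane arrangement and the only punctures come from points at infinity of a smooth compactification $\bar F$. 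Working in the chart $t=1/y_n$ shows that the three cubic branches and the quadratic sign choices remain separated over $\infty$, yielding exactly $3\cdot 2^{n-2}$ punctures. Ramification of $\pi$ occurs only at finitely many $y_n$ values: those where the cubic $y_1^3-y_n^2y_1-z_1=0$ has a double root, and, for each $i\in\{2,\dots,n-1\}$, those where $y_i=0$. Carefully weighting each critical value by the sign-freedom of the unaffected coordinates and summing via Riemann--Hurwitz yields the genus $(3n-6)2^{n-3}+1$. This bookkeeping is the main technical obstacle.

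Step three is then essentially formal. A punctured Riemann surface has free fundamental group; the base $Z$ is the ordered configuration space of $n-1$ points in $\mathbb{C}^*$, so iterated applications of Fadell--Neuwirth forgetting maps present $\pi_1(Z)$ as an iterated extension of free groups, hence poly-free. The homotopy long exact sequence of the fibration $F\to Y\to Z$ collapses to a short exact sequence $1\to\pi_1(F)\to\pi_1(Y)\to\pi_1(Z)\to 1$, and combining the poly-free series of $\pi_1(Z)$ with the free kernel $\pi_1(F)$ via pullback exhibits a poly-free series for $\pi_1(Y)$.
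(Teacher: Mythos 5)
The statement you were asked to prove is Thom's First Isotopy Lemma itself---a classical result of Thom--Mather stratification theory that the paper states without proof and attributes to Thom (1969) and Mather (1971). Your proposal does not prove this lemma; it instead \emph{invokes} it as a black box in order to establish Proposition~2.1 (the fibration $f\colon Y\to Z$ between the two specific hyperplane-arrangement complements), together with the genus computation and the poly-freeness conclusion. That is an argument for a different statement in the paper, not for the one in question.

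A genuine proof of Thom's First Isotopy Lemma is of a completely different nature: one constructs a system of controlled tubular neighborhoods for the strata of $A$ compatible with $f$, lifts a vector field on $Z$ (or a coordinate vector field in a chart) to a controlled, stratum-preserving vector field on $A$ that is $f$-related to it, verifies that properness of $f|_A$ allows the flow to be integrated for the required time, and uses the resulting flows to produce the local product structure simultaneously on all strata; the Whitney conditions are what guarantee the controlled vector field can be chosen continuously across strata. None of this machinery (tube systems, control data, lifting of vector fields, integrability) appears in your proposal, and the objects you do discuss---the Jacobian rank of $f$, the branched cover $\varphi$, Riemann--Hurwitz, Fadell--Neuwirth, the long exact sequence---are all specific to the application, not to the general lemma. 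If the intent was to prove Proposition~2.1, the outline is broadly in the spirit of the paper's argument (the paper projects the fiber onto $y_1$ rather than $y_n$, but the bookkeeping is analogous); however, as a proof of Thom's First Isotopy Lemma it is not an attempt at all.
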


\begin{thm} (Fadell-Neuwirth Fibration Theorem) Let $X$ be a connected
  manifold of dimension $\geq 2$, and $n\geq 2$. Then the projection map $X^n\to X^{n-1}$
  to the first $n-1$ coordinates, restricts to the following locally trivial fibration,
  with fiber homeomorphic to $X-\{(n-1)\text{-points}\}$. 

  $$\{x\in X^n\ |\ x_i\neq x_j,\ \text{for}\ i\neq j\}\to \{x\in X^{n-1}\ |\ x_i\neq x_j,\ \text{for}\ i\neq
  j\}.$$
    \end{thm}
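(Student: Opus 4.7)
The plan is to apply Thom's First Isotopy Lemma after first identifying the topology of the fibers $F_z := f^{-1}(z)$. I would begin by verifying $f(Y)\subseteq Z$ from the defining inequalities, then check that $f$ is a submersion on $Y$ by inspecting the $(n-1)\times n$ Jacobian: the key observation is that at most one of $y_2,\ldots,y_n$ can vanish at a point of $Y$ (otherwise two would coincide), after which a short case analysis exhibits a non-vanishing $(n-1)\times(n-1)$ minor at every point.

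The core computation is the identification of $F_z$. Setting $u := y_1$ and solving the system gives
\[
y_n^2 \;=\; \frac{u^3-z_1}{u}, \qquad y_i^2 \;=\; \frac{u^3-z_1+z_i}{u}\quad(i=2,\ldots,n-1),
\]
realizing $F_z$ as the fiber product over $\mathbb{C}^*_u$ of $n-1$ double covers, that is, a Galois $(\mathbb{Z}/2)^{n-1}$-cover of $\mathbb{C}^*$. All the remaining $Y$-conditions reduce to $z_i\ne 0$ and $z_i\ne z_j$, which hold on $Z$, so no further points are removed. Each of the $n-1$ double covers has $3$ ramification points in $\mathbb{C}^*$ (cube roots of a nonzero constant), is simply ramified at $u=0$, and is unramified at $u=\infty$; the $3(n-1)$ finite branch loci are pairwise disjoint, and the corresponding local monodromies generate $(\mathbb{Z}/2)^{n-1}$, so $F_z$ is connected. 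Compactifying the base to $\mathbb{P}^1$ and applying Riemann--Hurwitz,
\[
2g-2 \;=\; 2^{n-1}\cdot(-2) + \bigl(3(n-1)+1\bigr)\cdot 2^{n-2} \;=\; (3n-6)2^{n-2},
\]
so $g = (3n-6)2^{n-3}+1$; the number of punctures equals the preimage of $\{0,\infty\}$, namely $2^{n-2}+2^{n-1}=3\cdot 2^{n-2}$, matching the statement.

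For local triviality I would pass to a smooth relative compactification $\pi:\bar Y\to Z$ of the family (e.g.\ the normalization of the closure of the graph of $f$ in $(\mathbb{P}^1)^n\times Z$), whose fiber over $z$ is the smooth projective curve $\bar F_z$ identified above. The boundary $\bar Y\setminus Y$ is a smooth finite \'etale cover of $Z$ (of degree $3\cdot 2^{n-2}$), so Whitney regularity is automatic and the restriction $\pi|_{\bar Y\setminus Y}$ is submersive. Thom's First Isotopy Lemma therefore gives $f=\pi|_Y$ as a locally trivial fibration. Finally, $Z$ is the ordered configuration space of $n-1$ points in $\mathbb{C}^*$, hence a $K(\pi,1)$ with poly-free $\pi_1$ by iterated Fadell--Neuwirth, and $F_z$ is a punctured Riemann surface, hence a $K(\pi,1)$ with free $\pi_1$. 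The fibration long exact sequence collapses to $1\to\pi_1(F_z)\to\pi_1(Y)\to\pi_1(Z)\to 1$, exhibiting $\pi_1(Y)$ as an extension of a poly-free group by a free group, hence poly-free. The main obstacle is the compactification step: establishing $\bar Y\to Z$ as a genuinely smooth proper family with \'etale boundary---rather than taking the naive closure, which may have singularities at infinity---requires a careful normalization or resolution argument, and verifying submersivity and Whitney regularity rigorously at every boundary component is the delicate point in applying Thom's Lemma cleanly.
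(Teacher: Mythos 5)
Your proposal does not prove the stated theorem. The statement to be proved is the Fadell--Neuwirth fibration theorem: for any connected manifold $X$ of dimension at least $2$, the coordinate projection of the ordered configuration space of $n$ points in $X$ onto that of $n-1$ points is a locally trivial fibration with fiber homeomorphic to $X$ minus $n-1$ points. What you have written is instead a proof sketch of Proposition \ref{fibration} (the specific algebraic map $f:Y\to Z$ attached to the $\widetilde B_n$ arrangement). In fact your sketch \emph{uses} the Fadell--Neuwirth theorem as a black box at the end (``$Z$ is the ordered configuration space of $n-1$ points in $\mathbb{C}^*$, hence a $K(\pi,1)$ with poly-free $\pi_1$ by iterated Fadell--Neuwirth''), so it cannot possibly serve as a proof of that theorem without circularity. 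The paper does not prove Fadell--Neuwirth either; it cites \cite{FN62}. The actual proof is of an entirely different character: near a base configuration $(x_1,\dots,x_{n-1})$ one chooses disjoint coordinate charts around the marked points and builds a compactly supported ambient isotopy of $X$ carrying nearby configurations back to the base one, which directly furnishes local trivializations; no Jacobian, Riemann--Hurwitz, or Thom isotopy arguments are involved.

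As a side remark: read as a proposal for Proposition \ref{fibration}, your outline tracks the paper's quite closely (realize $C_z$ as a $(\mathbb{Z}/2)^{n-1}$-cover of $\mathbb{C}^*$, prove connectedness via monodromy generating the deck group, compute the genus by Riemann--Hurwitz, then invoke Thom's First Isotopy Lemma). The one genuine divergence is the compactification step: you propose normalizing the closure of the graph and flag possible singularities at infinity as the delicate point, whereas the paper shows directly that the closure $\overline C\subset\mathbb{P}^n\times Z$ is already smooth with smooth boundary $B_1\cup B_2$, and that $\pi$ is submersive on both strata (Lemmas \ref{lemma B12}, \ref{submersion}, \ref{smooth}), so no normalization is needed. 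The obstacle you worry about is precisely what those lemmas dispose of by explicit Jacobian computation.
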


  Consider the graph of the map $f:Y\to Z$ in ${\Bbb C}^n\times Z$, and the following 
  composite map.

  $$\eta:Y\to {\Bbb C}^n\times Z \subset {\Bbb P}^n\times Z.$$
  $$(y_1,y_2,\dots y_n)\mapsto ((1:y_1:y_2:\dots: y_n),
  f(y_1,y_2,\dots, y_n)).$$

  Let $C=\eta(Y)$ and $\pi:{\Bbb P}^n\times Z\to Z$ be the second
  projection. 

  We denote the closure of $C$ in ${\Bbb P}^n\times Z$ by
  $\overline C$ (in the sense of the usual topology). Let $C_z$ and
  $\overline C_z$ be the fibers of $\pi|_C$ and $\pi|_{\overline C}$
  over $z\in Z$, respectively.

  Let $S$ denote the following set of polynomials.
  $$S=\{z_i-y_1(y_i^2-y_n^2),\ \text{for}\ i=1,2,\dots, n-1\}.$$
  
  We know the zero set of $S$ in ${\Bbb C}^n\times Z$ is equal to $C$.

  Consider the homogenization $S_h$ of $S$ as follows.
  $$S_h=\{z_iy_0^3-y_1(y_i^2-y_n^2),\ \text{for}\ i=1,2,\dots, n-1\}.$$
  
  Then the zero set ${\mathcal Z}(S_h)$ of $S_h$ in ${\Bbb P}^n\times Z$ contains not
  only the closure $\overline C$ of $C$, but also another irreducible component defined
  by $y_0=y_1=0$. So we need to introduce more equations to define $\overline{C}$.
To do this, note that all the points of $\overline C$ satisfy the
following equations as well:
$$E=\Biggl\{\frac{y_i^2-y_n^2}{z_i}=\frac{y_j^2-y_n^2}{z_j},\
\text{for}\ i,j=1,2,\dots, n-1, i < j\Biggr\}$$
or equivalently, satisfy the following polynomials:
$$T=\{(y_1^2-y_n^2)z_j-(y_j^2-y_n^2)z_1,\
\text{for}\ j=2,\dots, n-1\}.$$

\begin{lemma}\label{lemma B12}
  $\overline C={\mathcal Z}(S_h\cup T)$, and $\overline C_z$ equals the closure
  $\overline{C_z}$ of $C_z$ in $\mathbb{P}^n$ for any $z\in Z$. Moreover, $\overline C-C=B=B_1\cup B_2$, where 
$$B_1=\{((0:\pm 1:\cdots: \pm 1), z)\ |\ z\in Z\},$$
  $$B_2=\Biggl\{\left( \left(0:0:\pm\sqrt{\frac{z_1-z_2}{z_1}}:
  \pm\sqrt{\frac{z_1-z_3}{z_1}}:\cdots :
  \pm\sqrt{\frac{z_1-z_{n-1}}{z_1}}:1\right), z\right)\ |\ z\in
  Z\Biggr\}.$$
\end{lemma}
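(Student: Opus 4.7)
The plan is to prove $\overline C = {\mathcal Z}(S_h \cup T)$ by establishing both inclusions, then read off $\overline C - C = B_1 \cup B_2$ from a direct case analysis of ${\mathcal Z}(S_h \cup T)$, and finally deduce the fiberwise identity $\overline C_z = \overline{C_z}$ by observing that the paths constructed for the hard inclusion remain inside a single fiber $C_z$.

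The easy inclusion $\overline C \subseteq {\mathcal Z}(S_h \cup T)$ reduces to checking that on $C$, where $y_0 = 1$ and $z_i = y_1(y_i^2 - y_n^2)$, every polynomial in $S_h$ vanishes by construction, and every polynomial in $T$ becomes $y_1(y_i^2-y_n^2)(y_j^2-y_n^2) - y_1(y_j^2-y_n^2)(y_i^2-y_n^2) = 0$; since ${\mathcal Z}(S_h\cup T)$ is closed in ${\Bbb P}^n\times Z$, the inclusion follows. To identify ${\mathcal Z}(S_h\cup T)$ set-theoretically I would case-split on $y_0$ and $y_1$: if $y_0 \neq 0$, normalizing $y_0 = 1$ places the point in $C$; if $y_0 = 0$ and $y_1 \neq 0$, $S_h$ forces $y_i^2 = y_n^2$ for $i = 1,\dots,n-1$ so $y_n \neq 0$, and normalization yields exactly $B_1$ (with $T$ then automatic); if $y_0 = y_1 = 0$, the $i = 1$ members of $T$ read $(y_j^2-y_n^2)z_1 = -y_n^2 z_j$, giving $y_j^2 = y_n^2(z_1-z_j)/z_1$, and the case $y_n = 0$ is excluded since it would force every $y_j = 0$, so normalizing $y_n = 1$ yields $B_2$ (with the remaining $T$-equations automatic). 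Hence ${\mathcal Z}(S_h\cup T) = C \sqcup B_1 \sqcup B_2$.

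For the reverse inclusion $B_1 \cup B_2 \subseteq \overline C$, which simultaneously delivers $\overline C_z = \overline{C_z}$, I would construct, for each fixed $z \in Z$ and each sign pattern, an explicit path inside $C_z$ whose projective limit is the prescribed boundary point. For a target in $(B_1)_z$, set $y_1 = t$ and solve $y_n^2 = t^2 - z_1/t$ together with $y_j^2 = t^2 + (z_j - z_1)/t$, picking signs $\epsilon_2,\dots,\epsilon_n \in \{\pm 1\}$; as $t \to \infty$ the projective point $(1 : y_1 : \cdots : y_n)$ tends to $(0 : 1 : \epsilon_2 : \cdots : \epsilon_n)$. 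For a target in $(B_2)_z$, set $y_n = s$ and take the small root $y_1 \sim -z_1/s^2$ of the cubic $y_1^3 - s^2 y_1 - z_1 = 0$; then $y_j^2 = s^2 + z_j/y_1 \sim s^2(z_1-z_j)/z_1$, and the projective limit as $s \to \infty$ is the prescribed point of $B_2$. Since each such path lies in a single fiber $C_z$, its projective limit lies in $\overline{C_z}$, giving $(B_1)_z \cup (B_2)_z \subseteq \overline{C_z}$, and hence $\overline{C_z} \supseteq C_z \sqcup (B_1)_z \sqcup (B_2)_z = \overline C_z \supseteq \overline{C_z}$, which yields all the claimed identities. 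I expect the main obstacle to be verifying that the constructed paths really lie in $Y$ (i.e., satisfy $y_i \neq \pm y_j$ for $i \neq j$) for all but finitely many parameter values; this should reduce to checking that each forbidden coincidence is cut out by a nontrivial polynomial equation in $t$ or $s$, which in turn uses $z_i \neq 0$ and $z_i \neq z_j$.
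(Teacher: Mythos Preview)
Your proposal is correct and follows essentially the same strategy as the paper: show $\overline C\subseteq {\mathcal Z}(S_h\cup T)$, compute ${\mathcal Z}(S_h\cup T)-C=B_1\cup B_2$ by a case analysis at infinity, and then show each boundary point lies in $\overline{C_z}$ via an explicit one-parameter path in $C_z$. The only differences are cosmetic: the paper passes to the chart $x_i=y_i/y_n$ and parametrizes by $x_0=t\to 0$, while you stay in the $y$-coordinates and send $y_1\to\infty$ (for $B_1$) or $y_n\to\infty$ (for $B_2$); and the paper splits cases via $y_n$ first, while you split via $y_1$.

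One remark: the ``main obstacle'' you anticipate---checking that the constructed paths actually lie in $Y$---is in fact automatic and not an obstacle at all. Since your paths satisfy $z_i=y_1(y_i^2-y_n^2)$ by construction and $z\in Z$, the conditions $z_i\neq 0$ force $y_1\neq 0$ and $y_i\neq\pm y_n$, while $z_i\neq z_j$ forces $y_i\neq\pm y_j$; hence $y\in Y$ with no further work. The paper uses exactly this observation (implicitly, via the statement $C'\cap(\Bbb C^n\times Z)=C$).
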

\begin{proof}
Let $C'={\mathcal Z}(S_h\cup T)$. Then obviously $C'\cap (\mathbb{C}^n\times Z)=C$ and $\overline C \subseteq C'$.

We need to study the points on $C'$ that are at infinity (that is, those satisfying $y_0=0$).
Note that, no point in $C'$ satisfies $y_0=y_n=0$, otherwise $y_1y_i^2=0$ for $1\le i\le n-1$ and 
$y_i^2z_j=y_j^2z_i$ for $1\le i<j\le n-1$. Consequently, $y_i=0$ for all $0\le i\le n$, which is not possible. 
So we can make the following change of coordinates. 
Consider the open subset defined by $y_n\neq 0$, and
put $x_i=\frac{y_i}{y_n}$, for $i=0,1,\dots, n$. Then $S_h\cup T$
takes the following form in this new coordinate system.
$$\aligned
&\widetilde {S_h\cup T}=\{z_ix_0^3-x_1(x_i^2-1), (x_i^2-1)z_j-(x_j^2-1)z_i\
\text{for}\ i,j=1,2,\dots, n-1\}\\
&=\{z_ix_0^3-x_1x_i^2+x_1 \, (1\le i\le n-1), x_i^2z_j-z_j-x_j^2z_i+z_i \,
(1\le i<j\le n-1) \}.
\endaligned
$$
The solutions of the last set of equations, when specifying $x_0=0$, is obviously $B_1\cup B_2$. Therefore $C'-C=B_1\cup B_2$. 

It is obvious that $C'_z\supseteq \overline C_z\supseteq \overline{C_z}$. To prove that
both ``$\supseteq$'' are equalities, it suffices to show that any point
$p=(0,p_1,\dots,p_{n-1})\in C'_z-C_z=B_1^z\cup B_2^z$ (where $B_i^z=B_i\cap\pi^{-1}(z)$ for $i=1,2$)
is the endpoint of a path in $C_z$. Indeed, if $p\in B_1^z$, for $0\le t<\epsilon$
(where $\epsilon>0$ is sufficiently small), we let $x_0(t)=t$, let $x_1(t)$ be a
solution of $z_1t^3-x_1^3+x_1=0$ that is continuous on $t$ and $x_1(0)=p_1$;  
then for $i=2,\dots,n-1$, let 
$x_i(t)$ be a solution of $x_i^2=z_it^3/x_1(t)+1$ that is continuous on $t$ and $x_i(0)=p_i$.
Then the path $(x_0(t),\dots,x_{n-1}(t))$ ($0\le t<\epsilon$) works as needed. 
Similarly, 
if $p\in B_2^z$, we let $x_0(t)=t$, let $x_1(t)$ be a solution of $z_1t^3-x_1^3+x_1=0$ that is continuous on $t$ and $x_1(0)=0$;  
then for $i=2,\dots,n-1$, let 
$x_i(t)$ be a solution of $x_i^2=(x_1(t)^2-1)z_i/z_1+1$ that is continuous on $t$
and $x_i(0)=p_i$. Then the path $(x_0(t),\dots,x_{n-1}(t))$ ($0\le t<\epsilon$) works as needed. So $C'_z= \overline C_z = \overline{C_z}$.

It follows from $C'_z= \overline C_z$ (for all $z\in Z$) that $C'=\overline C$.
Thus $\overline C={\mathcal Z}(S_h\cup T)$ and $\overline C-C=B_1\cup B_2$. 
\end{proof}

  \begin{rem}{\rm 
We remark here that, using the Macaulay2 program, $\widetilde{S_h\cup T}$ can be obtained 
by computing the primary
decomposition of the ideal $\widetilde S_h$, which is $S_h$ in
the $x$-coordinate system. In another way, 
taking the homogenization of a Gr\"{o}bner basis of $S$, also gives 
the polynomials $\widetilde {S_h\cup T}$ to describe $\overline
C$, near infinity.}\end{rem}

  We have now enough information about the points in $\overline
  C$, to proceed to prove Proposition \ref{fibration}.

  We require the following lemmas.
 
\begin{lemma}\label{surjective}
  $\pi|_C:C\to Z$ is a surjective submersion.  As a consequence, $C_z$ 
is a smooth curve for all $z\in Z$.
  \end{lemma}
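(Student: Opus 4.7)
The plan is to pull the problem back from $C$ to $Y$. The map $\eta$ is the graph embedding of $f$ followed by the standard open inclusion ${\Bbb C}^n\hookrightarrow{\Bbb P}^n$ in the first factor, hence a biholomorphism $Y\to C$ satisfying $\pi|_C\circ\eta=f$. So it suffices to show that $f:Y\to Z$ is a surjective submersion; the smoothness of each $C_z$ then follows from the submersion theorem, with $\dim_{\Bbb C}C_z=n-(n-1)=1$.

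For surjectivity, given $z\in Z$, I would set $y_n=0$, choose $y_1$ with $y_1^3=z_1$ (possible since $z_1\neq 0$), and take $y_i$ to be any square root of $z_i/y_1$ for $2\le i\le n-1$. Then $y_i^2=z_i/y_1$ for every $1\le i\le n-1$ (using $y_1^2=z_1/y_1$ for $i=1$), so the conditions $y_1\neq 0$ and $y_i\neq\pm y_j$ defining $Y$ reduce to $z_i\neq 0$ and $z_i\neq z_j$, both of which hold on $Z$, and $f(y)=z$ by construction.

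The main work is the submersion property. Computing the Jacobian $J=(\p f_i/\p y_j)$, I would exploit its structure: for each $2\le j\le n-1$ the column $\p f/\p y_j$ has a single nonzero entry $2y_1y_j$ in row $j$; the last column equals $-2y_1y_n\,(1,1,\ldots,1)^\top$; and the first column has entries $3y_1^2-y_n^2$ in row $1$ and $y_i^2-y_n^2$ in row $i\ge 2$. Consequently, whenever $y_j\neq 0$ the basis vector $e_j\in{\Bbb C}^{n-1}$ already lies in the column span of $J$. Writing $T=\{j:2\le j\le n-1,\ y_j=0\}$, the inequalities $y_i\neq\pm y_j$ force $|T|\le 1$.

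If $T=\emptyset$, then $e_2,\ldots,e_{n-1}\in\operatorname{col}(J)$; reducing the last column modulo these gives $-2y_1y_n\,e_1$ (nonzero if $y_n\neq 0$), while reducing the first column gives $(3y_1^2-y_n^2)\,e_1$, which equals $3y_1^2\,e_1\neq 0$ if $y_n=0$. Either way $e_1\in\operatorname{col}(J)$. If $T=\{k\}$, then $y_k=0$ forces $y_n\neq 0$, and the $2\times 2$ determinant of the reductions of the first and last columns in the basis $\{e_1,e_k\}$ is
$$\det\begin{pmatrix}3y_1^2-y_n^2 & -2y_1y_n\\ -y_n^2 & -2y_1y_n\end{pmatrix}=-6y_1^3y_n\neq 0,$$
so $e_1,e_k\in\operatorname{col}(J)$. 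In every case $J$ has rank $n-1$. The main obstacle is the case $T=\{k\}$, where the two most natural $(n-1)\times(n-1)$ minors vanish and one is forced into the $2\times 2$ computation above.
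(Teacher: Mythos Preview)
Your proof is correct and follows essentially the same strategy as the paper for the submersion part: the same Jacobian, the same case split according to whether some $y_j$ ($2\le j\le n-1$) vanishes, and in the degenerate case the same $2\times 2$ computation yielding $-6y_1^3y_n$. The paper packages this last step via a separate determinant sublemma, while you reach it more directly through column-span reduction; the content is identical. For surjectivity you give an explicit preimage (setting $y_n=0$, $y_1^3=z_1$, $y_i^2=z_i/y_1$), which is cleaner and more self-contained than the paper's argument, which instead points back to the paths built in the proof of Lemma~\ref{lemma B12}.
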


\begin{lemma}\label{submersion}  $\overline{C}$ is a smooth complex manifold of dimension $n$,
  and  $\pi|_{\overline{C}}: \overline{C}\to Z$ is a proper surjective submersion. As a consequence, ${\overline C}_z$ 
is a smooth projective curve for all $z\in Z$.\end{lemma}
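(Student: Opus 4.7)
The proof plan has four ingredients: (i) properness, (ii) surjectivity, (iii) smoothness of $\overline C$, and (iv) the submersion property of $\pi|_{\overline C}$. Properness is immediate because $\overline C$ is closed in $\mathbb{P}^n\times Z$ and $\mathbb{P}^n$ is compact. Surjectivity follows from Lemma \ref{surjective} since $C\subseteq \overline C$. Lemma \ref{surjective} also gives smoothness of $\overline C$ and submersion of $\pi|_{\overline C}$ at every point of the open dense subset $C$. So the work reduces to a local analysis at the boundary $B=B_1\cup B_2$ identified in Lemma \ref{lemma B12}. Since Lemma \ref{lemma B12} shows that no point of $\overline C$ has $y_0=y_n=0$, I would work throughout in the affine chart $y_n\neq 0$ with coordinates $x_i=y_i/y_n$, in which the defining equations become $\widetilde{S_h\cup T}$.

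At a point of $B_1$ one has $x_0=0$ and $x_i=\pm 1$ for $i\ge 1$. I would use the $n-1$ polynomials $\widetilde F_i=z_ix_0^3-x_1x_i^2+x_1$ alone: the Jacobian with respect to $(x_1,\dots,x_{n-1})$ is diagonal at the point, with entries $1-3x_1^2=-2$ and $-2x_1x_i=\pm 2$, hence nonsingular. So $\widetilde F_1,\dots,\widetilde F_{n-1}$ cut out locally a smooth $n$-manifold parametrized by $(x_0,z_1,\dots,z_{n-1})$, which must coincide with $\overline C$ there by a dimension count and the fact that the point lies in the closure of $C$. The fiber over $z$ is parametrized by $x_0$, hence smooth of dimension one, which yields the submersion property.

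The main obstacle is the $B_2$ stratum, where $x_0=x_1=0$ and $x_i^2=(z_1-z_i)/z_1\neq 0$ for $i\ge 2$. Here the equations $\widetilde F_i$ with $i\ge 2$ become degenerate, since $\partial_{x_i}\widetilde F_i=-2x_1x_i=0$; this reflects the extra irreducible component $\{y_0=y_1=0\}$ of $\mathcal Z(S_h)$, which passes through $B_2$. To cut $\overline C$ out cleanly, I would replace these by $G_{1j}=x_1^2z_j-z_j-x_j^2z_1+z_1$ for $2\le j\le n-1$, while retaining $\widetilde F_1$. The Jacobian of $\{\widetilde F_1,G_{12},\dots,G_{1,n-1}\}$ with respect to $(x_1,\dots,x_{n-1})$ at the point is diagonal with entries $\partial_{x_1}\widetilde F_1=1$ and $\partial_{x_j}G_{1j}=-2x_jz_1\neq 0$, so this system too cuts out a smooth $n$-manifold parametrized by $(x_0,z_1,\dots,z_{n-1})$.

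It remains to verify that the other equations in $\widetilde{S_h\cup T}$ hold automatically on this local manifold, so that it actually coincides with $\overline C$. The key identity is $1-x_i^2=z_i(1-x_1^2)/z_1$, obtained from $G_{1i}$, which immediately gives $\widetilde F_i=(z_i/z_1)\widetilde F_1$ and reduces $G_{ij}$ with $2\le i<j$ to a consequence of $G_{1i}$ and $G_{1j}$. Once this is done, the fiber over $z$ is again parametrized by $x_0$ and therefore smooth of dimension one, giving the submersion property at $B_2$ and completing the lemma. The principal difficulty is the $B_2$ analysis: one must first recognize that $\widetilde S_h$ alone is insufficient because of the parasitic component, and then select the correct mixture of $\widetilde S_h$ and $T$ as local defining equations.
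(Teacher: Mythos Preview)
Your argument is correct and follows the same overall plan as the paper --- properness from compactness of $\mathbb{P}^n$, surjectivity from Lemma~\ref{surjective}, and then a local analysis at $B_1$ and $B_2$ in the chart $y_n\neq 0$ --- but the local verification differs in style. The paper computes the kernel of the full Jacobian of \emph{all} polynomials in $\widetilde{S_h\cup T}$ with respect to $(x_0,\dots,x_{n-1},z_1,\dots,z_{n-1})$, writes that kernel down explicitly at each point of $B_1$ and $B_2$, and reads off both $\dim T_p\overline C=n$ (hence smoothness) and surjection onto the last $n-1$ coordinates (hence submersion) directly from its shape. You instead select, at each boundary stratum, a subset of $n-1$ defining equations whose $(x_1,\dots,x_{n-1})$-Jacobian is invertible --- the $\widetilde F_i$ alone at $B_1$, and $\widetilde F_1$ together with $G_{12},\dots,G_{1,n-1}$ at $B_2$ --- and apply the implicit function theorem to obtain local coordinates $(x_0,z_1,\dots,z_{n-1})$ on $\overline C$, which delivers smoothness and submersion simultaneously. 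The price is the extra verification that the omitted equations hold automatically on this local manifold: you do this by a closure/density argument at $B_1$ and by the identities $\widetilde F_i=(z_i/z_1)\widetilde F_1$ and the reduction of $G_{ij}$ to $G_{1i},G_{1j}$ at $B_2$. Your approach is a bit more constructive (it exhibits explicit charts) and makes transparent why the $T$-equations are indispensable precisely at $B_2$, where the parasitic component $\{y_0=y_1=0\}$ meets $\mathcal Z(S_h)$; the paper's approach is more uniform, treating both strata with the same Zariski tangent space computation.
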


\begin{lemma}\label{smooth} $B=\overline{C}-C$ is smooth and  $\pi|_B:B\to Z$ is a submersion.\end{lemma}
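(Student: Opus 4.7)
The plan is to use the explicit description of $B=B_1\cup B_2$ from Lemma \ref{lemma B12} and to carry out all the calculations in the single affine chart $U=\{y_n\neq 0\}\times Z\cong {\Bbb C}^n\times Z$, with coordinates $x_i=y_i/y_n$ ($0\le i\le n-1$) together with $z_1,\dots,z_{n-1}$. In both of the normalized presentations given by Lemma \ref{lemma B12}, the last homogeneous coordinate is nonzero, so $B\subset U$. Moreover, in these coordinates $x_1=\pm 1$ on $B_1$ but $x_1=0$ on $B_2$, so $B_1\cap B_2=\emptyset$, and it is enough to treat the two pieces separately.

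For $B_1$, the affine description is just the locus $\{(0,\epsilon_1,\dots,\epsilon_{n-1},z):\epsilon\in\{\pm 1\}^{n-1},\ z\in Z\}$. This is literally the disjoint union of $2^{n-1}$ constant sections of $\pi$, and the smoothness of $B_1$ together with the submersion property for $\pi|_{B_1}$ is then immediate. For $B_2$, the affine description is $\{(0,0,\epsilon_2\sqrt{(z_1-z_2)/z_1},\dots,\epsilon_{n-1}\sqrt{(z_1-z_{n-1})/z_1},z):\epsilon\in\{\pm 1\}^{n-2},\ z\in Z\}$. The key observation is that, since $Z\subset ({\Bbb C}^*)^{n-1}$ has pairwise distinct coordinates, each argument $(z_1-z_i)/z_1$ lies in ${\Bbb C}^*$ and hence avoids the branch point $0$. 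Consequently, over any simply connected open subset of $Z$, each function $\sqrt{(z_1-z_i)/z_1}$ admits a single-valued holomorphic branch, so $B_2$ is locally the disjoint union of $2^{n-2}$ graphs of holomorphic sections of $\pi$. This gives simultaneously the smoothness of $B_2$ and the submersion property for $\pi|_{B_2}$.

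There is no serious obstacle in this argument: once the homogeneous formulas for $B_1$ and $B_2$ are rewritten in the affine chart, both pieces are exhibited locally as explicit holomorphic sections of $\pi$, from which smoothness and the submersion property follow formally. The only point requiring any care is the choice of branch of the square root in the formula for $B_2$; but since the assertions of the lemma are local on $Z$, the possibly nontrivial global monodromy among the $2^{n-2}$ branches does not affect the proof.
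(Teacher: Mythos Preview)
Your argument is correct, and it takes a genuinely different route from the paper's. The paper treats $B_2$ by writing down explicit polynomial defining equations in the chart $\{y_n\neq 0\}$, namely $x_0,\ x_1,\ x_i^2z_1-z_1^2+z_i^2$ for $2\le i\le n-1$, then computes the Jacobian $J$ of this system with respect to $(x_0,\dots,x_{n-1},z_1,\dots,z_{n-1})$, checks that $\ker J$ has dimension $n-1$ (hence $B_2$ is smooth), and verifies that $\ker J$ surjects onto the $z$-coordinates (hence $\pi|_{B_2}$ is a submersion). You instead observe directly that, because $(z_1-z_i)/z_1\in{\Bbb C}^*$ on $Z$, local holomorphic branches of the square roots exist, so $B_2$ is locally a disjoint union of graphs of holomorphic sections of $\pi$; smoothness and the submersion property are then immediate consequences of the graph construction. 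Your approach is shorter and more conceptual, sidestepping the linear-algebra bookkeeping entirely; the paper's Jacobian computation, on the other hand, yields an explicit description of the tangent space $T_pB_2$, which is parallel to the tangent-space calculations already carried out for $\overline C$ in the proof of Lemma \ref{submersion} and so fits the paper's overall style. Both arguments rest on the same underlying fact---that the fiber coordinates of $B_2$ are locally holomorphic functions of $z$---viewed either implicitly (Jacobian rank) or explicitly (sections).
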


\begin{lemma} \label{connected}  The curve $\overline C_z$ is  connected and has
  genus $(3n-6)2^{n-3}+1$. As a consequence, $C_z$ is a connected curve of genus $(3n-6)2^{n-3}+1$ with $3\cdot 2^{n-2}$ punctures. 
  \end{lemma}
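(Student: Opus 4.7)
My plan is to project $\overline C_z$ to $\mathbb P^1$ via the $y_1$-coordinate, realize $\overline C_z$ as a $(\mathbb Z/2)^{n-1}$-Galois branched cover, and compute the genus by Riemann--Hurwitz. Setting $\alpha_n=z_1$ and $\alpha_i=z_1-z_i$ for $2\le i\le n-1$, the defining equations of $C_z$ rewrite as
$$
y_i^2\;=\;\frac{y_1^3-\alpha_i}{y_1}\qquad (i=2,\dots,n-1,n).
$$
For $z\in Z$ the values $\alpha_2,\dots,\alpha_n$ are pairwise distinct and nonzero, so each equation defines an elliptic double cover $E_i\to\mathbb P^1$ branched over $\{0\}$ together with the three cube roots of $\alpha_i$. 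Thus $\overline C_z$ is the normalization of the fiber product of the $E_i$'s over $\mathbb P^1$, carrying the natural action of $G=(\mathbb Z/2)^{n-1}$ by independent sign changes on $y_2,\dots,y_n$.

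To check that this cover has degree exactly $2^{n-1}$ (which, combined with the smoothness from Lemma~\ref{submersion}, yields connectedness), I would verify that the $f_i:=(y_1^3-\alpha_i)/y_1$ are multiplicatively independent modulo squares in $\mathbb C(y_1)^\times$. Indeed, for any nonempty $T\subseteq\{2,\dots,n\}$ and any $i\in T$, each cube root of $\alpha_i$ is a simple zero of $f_i$ and a nonzero regular value of every $f_j$ with $j\ne i$, so $\prod_{i\in T}f_i$ has odd-order zeros and cannot be a square.

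Next I enumerate the ramification of the $y_1$-projection. Over each of the $3(n-1)$ cube roots of the various $\alpha_i$ only the corresponding $f_i$ ramifies, so the stabilizer in $G$ is a single sign flip and there are $2^{n-2}$ preimages of ramification index $2$. Over $y_1=\infty$ every $f_i\sim y_1^2$ is a square, so the cover is unramified with $2^{n-1}$ preimages; these are exactly the points of $B_1^z$ in Lemma~\ref{lemma B12}. Over $y_1=0$ all $f_i$ have a simple pole, but the ratios $f_i/f_j$ are units at $0$ with value $\alpha_i/\alpha_j$, so each local completion collapses to the unique ramified quadratic extension $\mathbb C((y_1))(\sqrt{y_1})$, giving $2^{n-2}$ preimages of ramification index $2$ above $0$, matching exactly $B_2^z$.

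Riemann--Hurwitz then yields
$$
2g(\overline C_z)-2\;=\;-2\cdot 2^{n-1}+(1+3(n-1))\cdot 2^{n-2}\;=\;2^{n-2}(3n-6),
$$
so $g(\overline C_z)=(3n-6)2^{n-3}+1$, and $C_z=\overline C_z\setminus(B_1^z\cup B_2^z)$ has $2^{n-1}+2^{n-2}=3\cdot 2^{n-2}$ punctures, as claimed. The main obstacle I anticipate is the local analysis at $y_1=0$: all $n-1$ of the $\sqrt{f_i}$ simultaneously ramify there, yet they generate only a single quadratic extension of $\mathbb C((y_1))$, so the cover splits into $2^{n-2}$ (rather than $1$) preimages; the explicit description of $B_2^z$ in Lemma~\ref{lemma B12} serves as a convenient cross-check for this count.
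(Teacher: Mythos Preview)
Your argument is correct and follows essentially the same route as the paper: project $\overline C_z$ to $\mathbb P^1$ via $y_1$, identify the ramification over the cube roots of the $\alpha_i$, over $0$, and over $\infty$, and apply Riemann--Hurwitz. The only notable difference is in the connectedness step---the paper lifts explicit loops around the cube roots to exhibit the sign-flip monodromy, whereas you phrase the same fact algebraically via Kummer theory (multiplicative independence of the $f_i$ modulo squares); your local-field analysis at $y_1=0$ likewise replaces the paper's explicit power-series parametrization, but the content is the same.
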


To prove Lemma \ref{surjective}, we need the following  
calculation of the determinant of a particular type of matrices.

\begin{slemma}\label{set} Consider the following $(n-1)\times n$
    matrix with complex entries. 
    $$
  M=\begin{pmatrix}b_0&0&0&\cdots&0&\cdots &0&c_0\\
    b_1&a_1&0&\cdots&0&\cdots &0&c_1\\
    b_2&0&a_2&\cdots&0&\cdots&0&c_2\\
    \vdots&&&\ddots&&\ddots&&\vdots\\
    b_{i-1}&0&0&\cdots&a_{i-1}&\cdots&0&c_{i-1}\\
    \vdots&&&\ddots&&\ddots&&\vdots\\
     b_{n-2}&0&0&\cdots&0&\cdots&a_{n-2}&c_{n-2}
   \end{pmatrix}$$
   Let $M'$ be the matrix obtained from $M$ after removing the
   $i$-th column for $2\leq i\leq n-1$. Then the determinant of $M'$
   is given by the following formula.
   $$|M'|=(-1)^{n+i-1}a_1\cdots a_{i-2}a_i\cdots a_{n-2}(b_0c_{i-1}-c_0b_{i-1})$$
 \end{slemma}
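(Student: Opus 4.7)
The plan is to compute $|M'|$ by Laplace expansion along the single row of $M'$ that becomes sparse when column $i$ is removed from $M$. Specifically, row $i$ of $M$ has three nonzero entries $b_{i-1}$, $a_{i-1}$, $c_{i-1}$; deleting column $i$ kills $a_{i-1}$, so row $i$ of $M'$ contains only $b_{i-1}$ at the first position and $c_{i-1}$ at the last (which is now position $n-1$). Expansion along this row gives
\[
|M'| = (-1)^{i+1} b_{i-1}\, \det(M'_{i,1}) + (-1)^{i+(n-1)} c_{i-1}\, \det(M'_{i,n-1}),
\]
where $M'_{i,j}$ denotes the minor obtained by deleting row $i$ and column $j$ of $M'$.

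Next I would evaluate the two $(n-2)\times(n-2)$ minors, which is where the special structure of $M$ pays off. The minor $M'_{i,n-1}$ comes from deleting row $i$ and the final column of $M'$, equivalently deleting row $i$ and columns $i$ and $n$ of $M$; what remains is lower triangular with diagonal $b_0, a_1, \ldots, a_{i-2}, a_i, \ldots, a_{n-2}$, so its determinant equals $b_0 a_1 \cdots a_{i-2} a_i \cdots a_{n-2}$. The minor $M'_{i,1}$ comes from deleting row $i$ together with columns $1$ and $i$ of $M$; here the top row carries only $c_0$, at the rightmost position, so a further cofactor expansion along that row (contributing a sign $(-1)^{1+(n-2)}=(-1)^{n-1}$) reduces matters to an $(n-3)\times(n-3)$ diagonal matrix with diagonal $a_1,\ldots,a_{i-2},a_i,\ldots,a_{n-2}$. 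This yields $\det(M'_{i,1}) = (-1)^{n-1} c_0\, a_1 \cdots a_{i-2} a_i \cdots a_{n-2}$.

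Setting $A=a_1\cdots a_{i-2}\, a_i\cdots a_{n-2}$ and substituting back, the two contributions combine as
\[
|M'| = (-1)^{i+n} b_{i-1} c_0 A + (-1)^{i+n-1} b_0 c_{i-1} A = (-1)^{n+i-1} A\,(b_0 c_{i-1}-c_0 b_{i-1}),
\]
which is the claimed formula. The main obstacle, such as it is, is purely bookkeeping: tracking how column indices shift once column $i$ (and later column $1$ or $n$) is removed, and keeping the various $(-1)^{?}$ factors consistent. Conceptually, the computation is immediate from the observation that after removing column $i$ exactly one row of $M'$ is left with only two surviving nonzero entries, namely the first and the last.
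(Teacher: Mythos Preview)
Your argument is correct. Both your proof and the paper's proof hinge on the same observation: after deleting column $i$, row $i$ of $M'$ has only the two entries $b_{i-1}$ and $c_{i-1}$, in the first and last positions. From there the two proofs diverge mechanically. You expand $|M'|$ by cofactors along that sparse row, reducing to two $(n-2)\times(n-2)$ minors that you recognise as (lower) triangular and diagonal, respectively. The paper instead moves row $i$ to row $2$ and the last column to column $2$; this costs a sign $(-1)^{(i-2)+(n-3)}$ and leaves a block lower-triangular matrix with a $2\times 2$ block $\begin{psmallmatrix}b_0&c_0\\ b_{i-1}&c_{i-1}\end{psmallmatrix}$ in the top left and the diagonal entries $a_1,\ldots,a_{i-2},a_i,\ldots,a_{n-2}$ below, so the determinant is read off in one line. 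Your route requires tracking two cofactor signs and one further expansion; the paper's route trades that for tracking the sign of a single row/column permutation. Neither gains anything substantial over the other.
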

 
\begin{proof}
     Note that $M'$ is the following $(n-1)\times (n-1)$ matrix.
     $$
  M'=\begin{pmatrix}b_0&0&0&\cdots&0&0&\cdots &0&c_0\\
    b_1&a_1&0&\cdots&0&0&\cdots &0&c_1\\
    b_2&0&a_2&\cdots&0&0&\cdots&0&c_2\\
    \vdots&&&\ddots&&&\ddots&&\vdots\\
    b_{i-2}&0&0&\cdots&a_{i-2}&0&\cdots&0&c_{i-2}\\
    b_{i-1}&0&0&\cdots&0&0&\cdots&0&c_{i-1}\\
    b_i&0&0&\cdots&0&a_i&\cdots&0&c_i\\
    \vdots&&&\ddots&&&\ddots&&\vdots\\
     b_{n-2}&0&0&\cdots&0&0&\cdots&a_{n-2}&c_{n-2}
   \end{pmatrix}$$
Moving row $i$ to row 2, and moving column $n-1$ to column 2, we get
     $$
  M''=\begin{pmatrix}b_0&c_0&0&0&\cdots&&&\cdots &0\\
    b_{i-1}&c_{i-1}&0&0&\cdots&&&\cdots &0\\
    b_1&c_1&a_1&0&\cdots&&&\cdots &0\\
    b_2&c_2&0&a_2&\cdots&&&\cdots&0\\
    \vdots&&&&\ddots&&&&\vdots\\
    b_{i-2}&c_{i-2}&0&0&\cdots&a_{i-2}&0&\cdots&0\\
    b_i&c_i&0&0&\cdots&0&a_i&\cdots&0\\
    \vdots&&&&&&&\ddots&\vdots\\
     b_{n-2}&c_{n-2}&0&0&\cdots&0&0&\cdots&a_{n-2}
   \end{pmatrix}$$
  It follows that 
  $$|M'|=(-1)^{(i-2)+(n-1-2)}|M''|=(-1)^{n+i-1}a_1\cdots a_{i-2}a_i\cdots a_{n-2}(b_0c_{i-1}-c_0b_{i-1}).$$
\end{proof}

\begin{proof}[Proof of Lemma \ref{surjective}]
  For any $z\in Z$, we have constructed paths contained in $C_z$ in the proof of
  Lemma \ref{lemma B12}; so $C_z\neq\emptyset$. This shows surjectivity. 

  $\pi|_C$ is a submersion follows from the fact that the following
  Jacobian matrix has full rank $n-1$. 
  $$
  M_1=\begin{pmatrix}3y_1^2-y_n^2&0&0&\cdots &0&-2y_1y_n\\
    y_2^2-y_n^2&2y_1y_2&0&\cdots &0&-2y_1y_n\\
    y_3^2-y_n^2&0&2y_1y_3&\cdots&0&-2y_1y_n\\
    \vdots&&&\ddots&&\vdots\\
    y_{n-1}^2-y_n^2&0&0&\cdots&2y_1y_{n-1}&-2y_1y_n

    \end{pmatrix}$$

    \noindent
    {\bf Claim.} ${\rm rank} M_1=n-1$.

    \noindent
    {\it Proof of claim.} First note that $y_1\neq 0$. We consider now
    two cases.

    {\bf Case 1.} $y_n=0$. Then since $y_i\neq y_j$ for all $i\neq j$,
    we get $y_k\neq 0$ for all $k=2,\dots n-1$. Hence  the $(n-1)\times (n-1)$ submatrix obtained by removing the last column
    of $M_1$ has determinant $3\cdot 2^{n-2}y_1^{n}y_2\dots y_{n-1}\neq
    0$.
    Hence ${\rm rank} M_1=n-1$. 

    {\bf Case 2.} $y_n\neq 0$. This case is divided into two
    subcases.

    {\bf Subcase 2.1.} $y_i\neq 0$ for all $i=2,\dots, n-1$. The
    $(n-1)\times (n-1)$ submatrix obtained by removing the first column of $M_1$ has
    determinant $\pm2^{n-1}y_1^{n-2}y_2\cdots y_{n}\neq0$.  
  Therefore, $M_1$ has rank $n-1$.

  {\bf Subcase 2.2.} $y_{i_0}=0$ for some $2\le i_0\le n-1$. Then, clearly
  $y_k\neq 0$ for all $k\neq i_0$, since $y_i\neq y_j$ for $i\neq j$.  By Sublemma \ref{set}, the
      $(n-1)\times (n-1)$ submatrix obtained by removing $i_0$-th column of $M_1$ has determinant 
  $$\aligned
  &\quad (-1)^{n+i_0-1}(2y_1y_2)\cdots (2y_1y_{i_0-1})(2y_1y_{i_0+1})\cdots (2y_1y_{n-1})\\
  &\quad\quad\quad\quad\quad \cdot \big( (3y_1^2-y_n^2)(-2y_1y_n)-(-2y_1y_n)(-y_n^2)\big)\\
  &=(-1)^{n+i_0-1}3\cdot 2^{n-2}y_1^ny_2\cdots y_{i_0-1}y_{i_0+1}\cdots y_n\neq0
  \endaligned
  $$
Therefore, rank 
  of $M_1$ is $n-1$, in this case also.

  This proves our claim.
  Therefore, $\pi|_C$ is a submersion. The dimension of a fiber is $\dim C-\dim Z=1$, so $C_z$ is a smooth curve for any $z\in Z$. 
\end{proof}

\begin{proof}[Proof of Lemma \ref{submersion}]
The properness of $\pi|_{\overline{C}}$ follows from the facts that 
  $\overline C$ is closed and $\pi$ is proper.

  To show that  $\overline C$ is smooth, it suffices to show that for
  $p=(0,p_1,\dots,p_{n-1})\in B=B_1\cup B_2$, the Zariski tangent space $T_p\overline C$
  has dimension $n$ (\cite{Har77}, Chapter I, Exercise 5.10).
  Note that the Zariski tangent space is the kernel of the Jacobian $J$ of polynomials in $\widetilde
{S_h\cup T}$ and variables $x_0,\dots,x_{n-1},z_1,\dots,z_{n-1}$:
$$
\begin{pmatrix}
3z_1x_0^2&-3x_1^2+1&0&\cdots&0&x_0^3&0&\cdots&0\\
3z_2x_0^2&-x_2^2+1&-2x_1x_2&\cdots&0&0&x_0^3&\cdots&0\\
\vdots&&&\ddots&&&&\ddots\\
3z_{n-1}x_0^2&-x_{n-1}^2+1&0&\cdots &-2x_1x_{n-1}&0&0&\cdots&x_0^3\\
0&2z_jx_i&-2z_ix_j&&&-(x_j^2-1)&x_i^2-1\\
\end{pmatrix}
$$
where the last row should be understood as $\binom{n-1}{2}$ rows: that for
any $1\le i<j\le n-1$, there is a row with all 0 entries except the entries 
$2z_jx_i$, $-2z_ix_j$, $-(x_j^2-1)$, $x_i^2-1$ at places corresponding to variables $x_i,x_j, z_i, z_j$ respectively. 

If $p\in B_1$, it is easy to see that $\ker J$ is of the form $(*,0^{n-1},*^{n-1})$
where $*$ is an arbitrary number in $\mathbb{C}$, so $\dim(\ker J)=n$; moreover,
it projects surjectively to the last $(n-1)$ coordinates, so $\pi|_{\overline{C}}$ is a submersion at $p$.   

If $p\in B_2$, it is also straightforward to check that the kernel of $J$ consists of the following vectors: 
$$\Bigg(*,0,\frac{z_2u_1-z_1u_2}{2p_2z_1^2},\frac{z_3u_1-z_1u_3}{2p_3z_1^2},\dots,
\frac{z_{n-1}u_1-z_1u_{n-1}}{2p_{n-1}z_1^2},u_1,\dots,u_{n-1}
\Bigg)$$ 
where $*,u_1,\dots,u_{n-1}$ are arbitrary numbers in $\mathbb{C}$, so $\dim(\ker J)=n$;
moreover, it projects surjectively to the last $(n-1)$ coordinates, so $\pi|_{\overline{C}}$ is still a submersion at $p$.   

So we have proved that $\overline{C}$ is smooth, and $\pi|_{\overline{C}}$ is a submersion.
It follows that $\overline{C}_z$ is a smooth projective curve for all $z\in Z$.  
\end{proof}

\begin{proof}[Proof of Lemma \ref{smooth}]
We need to show that $B$ is smooth and $\pi|_B$ is a submersion. At the 
points of $B_1$, both these assertions are obvious.

Next, note that, $B_2$ is the zero set in ${\Bbb P}^n\times Z$ of the following polynomials.
$$\{x_0, x_1, x_2^2z_1-z_1^2+z_2^2,\dots, x_i^2z_1-z_1^2+z_i^2,\dots, x_{n-1}^2z_1-z_1^2+z_{n-1}^2\}.$$
Let $p=(0,p_1,\dots,p_{n-1})\in B_2$. 
The Jacobean matrix of the above polynomials with respect
to $x_0$, $x_1$,$\dots$, $x_{n-1}$, $z_1$, $\dots$, $z_{n-1}$ is the following:
$$J=\begin{pmatrix}1&0&0&\cdots&0&0&0&\cdots&0\\
  0&1&0&\cdots&0&0&0&\cdots&0\\
  0&0&2x_2z_1&\cdots&0&x_2^2-2z_1&2z_2&\cdots&0\\
\vdots&&&\ddots&&&&\ddots&\vdots\\
0&0&0&\cdots&2x_{n-1}z_1&x_{n-1}^2-2z_1&0&\cdots&2z_{n-1}
  \end{pmatrix}$$
the kernel of which consists of the following vectors:
$$\Bigg(0,0,\frac{(2z_1-p_2^2)u_1-2z_2u_2}{2p_2z_1},
\dots,
\frac{(2z_1-p_{n-1}^2)u_1-2z_{n-1}u_{n-1}}{2p_{n-1}z_1},
u_1,\dots,u_{n-1}
\Bigg)$$ 
where $u_1,\dots,u_{n-1}$ are arbitrary numbers in $\mathbb{C}$, so $\dim T_pB_2=\dim(\ker J)=n-1$ and hence
$B_2$ is smooth. Moreover, it projects surjectively to the last $(n-1)$ coordinates, so $\pi|_{B}$ is a submersion.  
\end{proof}

  \begin{proof}[Proof of Lemma \ref{connected}]
Fix $z\in Z$. Consider the projection map
$$\varphi: C_z\to \mathbb{C}^*, \quad (y_1,\dots,y_n)\mapsto y_1$$
The map $\varphi$ is generically $2^{n-1}$ to 1; the fiber $\varphi^{-1}(y_1)$ consists of points 
$$\Bigg(y_1,\pm\sqrt{y_1^2-\frac{z_1-z_2}{y_1}},\dots,
\pm\sqrt{y_1^2-\frac{z_1-z_{n-1}}{y_1}},
\pm\sqrt{y_1^2-\frac{z_1}{y_1}}\Bigg). 
$$
It suffices to show the points in each fiber are connected in $C_z$. 

Given a point $p$ in the fiber $\varphi^{-1}(y_1)$, construct a path
$\gamma\subset \mathbb{C}^*$ as follows: start from the point $y_1=\varphi(p)$
and go along a path $\alpha$ to a small neighborhood of $a=\sqrt[3]{z_1}$
(note that $a$ is not unique), go along a small circle $\beta$ around $a$,
then follow $\alpha^{-1}$ back to $y_1$ (so $\gamma$ is the concatenation
of $\alpha, \beta, \alpha^{-1}$).  Choose $\gamma$ carefully to avoid ramification points of $\varphi$. 
Now lift the path $\gamma$ to a path in $C_z$ starting at $p$. Then $\gamma$
will end at the point $(y_1,\dots,y_{n-1},-y_n)$. To see this, write $y_1=a+t$, then 
\begin{equation}\label{sqrt t}
\aligned
\sqrt{y_1^2-\frac{z_1}{y_1}}
&=\sqrt{(a+t)^2-\frac{a^3}{a+t}}=\sqrt{a^2+2at+t^2-a^2(1-\frac{t}{a}+\frac{t^2}{a^2}-\cdots)}\\
&=\sqrt{3at+O(t^3)}\approx \sqrt{3at}
\endaligned
\end{equation} so when $y_1$ goes around a circle $\beta$ around $a$,
$\sqrt{y_1^2-\frac{z_1}{y_1}}$ is multiplied by $-1$, therefore, $y_n$
becomes $-y_n$. On the other hand, it is easy to see that for $1\le i\le n-1$, $y_i$ does not change.

Similar to the above construction, but let $a$ be $\sqrt[3]{z_1-z_i}$, we can
obtain a path connecting $p$ with $(y_1,\dots,-y_{i},\dots, y_n)$, for any $1\le i\le n-1$. 

Therefore $C_z$ is connected. As a consequence, $\overline{C}_z$ is connected. 

\smallskip

Next, we compute the genus of $\overline{C}_z$ using the Riemann-Hurwitz formula.
Consider $\overline{\varphi}:\overline{C}_z\to\mathbb{P}_1$, the natural extension of $\varphi$. 
Then 
$$2g(\overline{C}_z)-2=2^{n-1}(2g(\mathbb{P}^1-2)+\sum (e_P-1)$$
We compute the ramification index at each possible ramification point. 

At $a=(\sqrt[3]{z_1}:1)$, the map $\overline{\varphi}$ can be locally written
as $t\mapsto t^2$ (see \eqref{sqrt t}), so the contribution to $\sum (e_P-1)$
is $2-1=1$. Since there are 3 choices for $\sqrt[3]{z_1}$ and $2^{n-2}$
possible choices for the signs: $(y_1,\pm y_2,\dots,\pm y_{n-1},0)$
The total contribution to $\sum (e_P-1)$ is $3\cdot 2^{n-2}$.

At $a=(\sqrt[3]{z_1-z_i}:1)$ ($2\le i\le n-1$), by a similar computation,
the contribution is  $3\cdot 2^{n-2}$ for each $i$. 

At $(0:1)$, $\overline{\varphi}^{-1}((0:1))=B_2^z$. For each $p\in B_2^z$, we parametrize $x_0=t$,
then solve $x_1$ from  $z_1t^3-x_1^3+x_1=0$, we get $x_1\approx -z_1t^3-z_1^3t^9$;   
then for $i=2,\dots,n-1$, let 
$x_i(t)$ be a solution of $x_i^2=z_it^3/x_1(t)+1$, so
$x_i\approx \sqrt{1-\frac{z_i}{z_1}(1-z_1^2t^6)}$. (This computation shows that
the curve is indeed analytically parametrized  by $t$.) The map $\overline{\varphi}$
at $p$ is locally $t\mapsto y_1/y_0=x_1/x_0\approx -z_1t^2$, or equivalently,
locally $t\mapsto t^2$. So the contribution is $|B_2^z|=2^{n-2}$. 

At $\infty:=(1:0)$, $\overline{\varphi}^{-1}(\infty)=B_1^z$. For each
$p=(0,p_1,\dots,p_{n-1})\in B_1^z$, we parametrize $x_0=t$,  then solve
$x_1$ from  $z_1t^3-x_1^3+x_1=0$, we get $x_1\approx p_1+\frac{z_1}{2}t^3$;   
then for $i=2,\dots,n-1$, let 
$x_i(t)$ be a solution of $x_i^2=z_it^3/x_1(t)+1$, so
$x_i\approx \sqrt{1-\frac{z_i}{z_1}(p_1-\frac{z_1}{2}t^3)}$. The map $\overline{\varphi}$
at $p$ is locally $t\mapsto y_0/y_1=x_0/x_1\approx t/p_1$, or equivalently, locally
an isomorphism. So there is no ramification at $p$.

To summarize in a table: 

\smallskip

\begin{tabular}{|c | c | c | c|} 
 \hline
 pts. in $\mathbb{P}^1$ & ramif. index  & \#of such points & contrib. to $\sum (e_P-1)$  \\ [0.5ex] 
 \hline
 $\sqrt[3]{z_1}$ and $\sqrt[3]{z_1-z_i}$ & $2$ & $3(n-1) 2^{n-2}$ & $3(n-1)2^{n-2}$ \\ 
 $B_2^z$ & $2$ & $2^{n-2}$ & $2^{n-2}$ \\
 $B_1^z$ & $1$ & $2^{n-1}$ & $0$ \\
 \hline
\end{tabular}
\smallskip

So
$2g(\overline{C}_z)-2=2^{n-1}(-2)+3(n-1)2^{n-2}+2^{n-2}=(3n-6)2^{n-2}$, 
then  $g(\overline{C}_z)=(3n-6)2^{n-3}+1$.  

As a consequence,
 $C_z$ is a connected curve of genus $(3n-6)2^{n-3}+1$ with $|B_1^z\cup B_2^z|=3\cdot 2^{n-2}$ punctures. 
  \end{proof}
  Note that the connectedness in Lemma \ref{connected} can also be proved using
  the idea of \cite[Lemma 4.6]{AR19}; the detail of which we shall skip. 
  
Now we are in a position to prove Proposition \ref{fibration}.

  \begin{proof}[Proof of Proposition \ref{fibration}]
    Note that $f=\pi|_C\circ \eta$. Since $\eta:Y\to C$ is a
  homeomorphism, $f$ is a locally trivial fibration if and only if so is $\pi|_C$.

  We proceed to prove that $\pi|_C$ is a fibration using Thom's
  First Isotopy Lemma.

  Since, $C$ is open and $B$ is its boundary, $\{C, B\}$ is a Whitney
  stratification of $\overline C$.
  Now, Lemmas \ref{surjective}, \ref{submersion}, \ref{smooth} show
  that the remaining conditions of the Thom's First Isotopy Lemma are satisfied
  for $\pi:{\Bbb P}^n\times Z\to Z$ and $\overline C$. Hence
  $\pi|_{\overline C}$ is a locally trivial fibration. Therefore, so
  is $\pi|_C$ also. Consequently, $f$ is a fibration with connected
  non-compact fibers.

  Next, we apply the Fadell-Neuwirth Fibration Theorem for $X={\Bbb C}^*$.
  By an induction argument on $n$ and using the long exact sequence of homotopy
  groups induced by a
  fibration, we get that $Z$ is aspherical and $\pi_1(Z, z)$ is poly-free.
  Consequently, again using the long exact sequence of homotopy groups induced by $f$, and
  using the fact that $Z$ is aspherical, we get the following short exact sequence.

  \centerline{\xymatrix {1\ar[r]& \pi_1(f^{-1}(z), y)\ar[r]& \pi_1(Y,y)\ar[r]^{f_*}& \pi_1(Z,z)\ar[r] &1.}}
  
  Therefore, 
  $\pi_1(Y, y)$ is also poly-free, since the fiber $f^{-1}(z)$ (homeomorphic to $C_z$) is a
  punctured, connected $2$-manifold by Lemma \ref{connected}.
  
  This completes the proof of Proposition \ref{fibration}.
\end{proof}

\newpage
\bibliographystyle{plain}
\ifx\undefined\bysame
\newcommand{\bysame}{\leavevmode\hbox to3em{\hrulefill}\,}
\fi

\end{document}